\definecolor{webgreen}{rgb}{0,.5,0}
\definecolor{webbrown}{rgb}{.6,0,0}
\newcommand{\seqnum}[1]{\href{http://oeis.org/#1}{\underline{#1}}}
\newcommand{\Figs}[1]{\hyperref[#1]{Figure~\ref*{#1}}}
\newcommand{\Tabs}[1]{\hyperref[#1]{Table~\ref*{#1}}}
\newcommand{\Sec}[1]{\hyperref[#1]{section~\ref*{#1}}}
\newcommand{\Lem}[1]{\hyperref[#1]{Lemma~\ref*{#1}}}
\providecommand{\tabularnewline}{\\}
\theoremstyle{plain}
\newtheorem{theorem}{Theorem}
\newtheorem{lemma}[theorem]{Lemma}
\theoremstyle{definition}
\newtheorem{notation}[theorem]{Notation}
\theoremstyle{remark}
\newtheorem{remark}[theorem]{Remark}
\title{\bf A one-variable bracket polynomial for some  Turk's head knots}
\author{Franck Ramaharo\\
\small D\'epartement de Math\'ematiques et Informatique\\[-0.8ex]
\small Universit\'e d'Antananarivo\\[-0.8ex] 
\small 101 Antananarivo, Madagascar\\
\small\href{mailto:franck.ramaharo@gmail.com}{\tt franck.ramaharo@gmail.com}\\
}
\date{\small\today\\}
\begin{document}

\maketitle

\begin{abstract}
We compute the Kauffman bracket polynomial of \textit{the three-lead Turk's head}, \textit{the chain sinnet} and \textit{the figure-eight chain} shadow diagrams. Each of these knots can in fact  be constructed by repeatedly concatenating  the same $ 3 $-tangle, respectively, then taking the closure. The bracket is then evaluated by expressing the state diagrams of the concerned $ 3 $-tangle by means of the Kauffman monoid diagram's elements.

\bigskip\noindent  {Keywords:} bracket polynomial, tangle shadow, Kauffman state, flat sinnet.
\end{abstract}

\section{Introduction}
The present paper is a follow-up on our previous work which aims at collecting statistics on knot shadows \cite{Ramaharo}. We would like to establish  the bracket polynomial for knot diagram generated by the  $ 3 $-tangle shadows below:
\begin{equation}\label{eq:braid}
\begin{array}{cccccccc}
\protect\includegraphics[width=0.075\linewidth,valign=c]{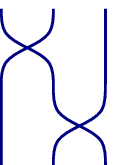}&\qquad&
\protect\includegraphics[width=0.075\linewidth,valign=c]{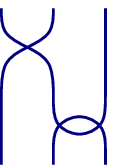}&\qquad&
\protect\includegraphics[width=0.075\linewidth,valign=c]{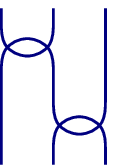}\\
T  &\qquad&  C  &\qquad&  E
\end{array}.
\end{equation}
The knot diagrams under consideration are those obtained by repeatedly multiplying (or concatenating) the same $ 3 $-tangle, then taking the closure of the resulting $ 3 $-tangle (i.e., connecting the endpoints in a standard way, without introducing further crossings between the strands).  Knots obtained from the $ 3 $-tangles pictured in \eqref{eq:braid} belong to the Ashley's \textit{Turk's head} family \cite[p.\ 226, Chap.\ 17]{ABOK}: the \textit{three-lead Turk's head} \cite[\#1305]{ABOK}, the \textit{chain sinnet} \cite[\#1374]{ABOK} and the \textit{figure-eight chain} \cite[\#1376]{ABOK}, respectively (e.g. see \Figs{Fig:TurksHead}).

\begin{figure}[ht]
\centering
$\begin{array}{cccccccc}
\protect\includegraphics[width=0.225\linewidth,valign=c]{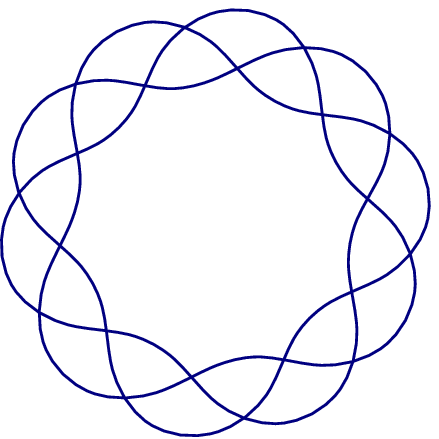}&\quad&
\protect\includegraphics[width=0.225\linewidth,valign=c]{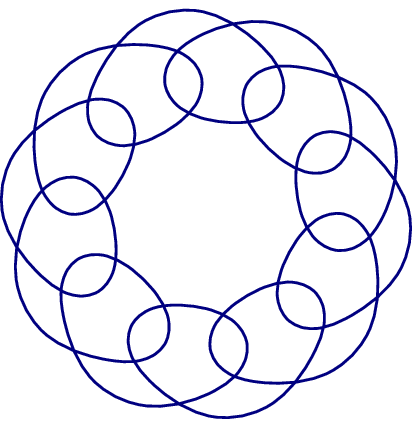}&\quad&
\protect\includegraphics[width=0.225\linewidth,valign=c]{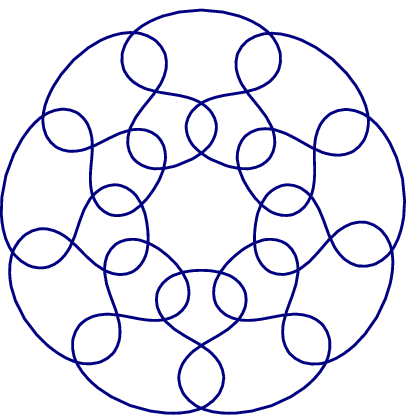}\\
\mbox{Three-lead Turk's head}  &\quad& \mbox{Chain sinnet}  &\quad&  \mbox{Figure-eight chain}
\end{array}$
\caption{Some flat Turk's-head knot diagrams.}
\label{Fig:TurksHead}
\end{figure}

The remainder of this paper is arranged as follows. In \Sec{sec:bracket}, we establish the expression of the bracket polynomial for any $ 3 $-tangle shadow diagram. Then in \Sec{sec:application}, we apply those results to the flat sinnet Turk's heads  mentioned earlier.

\section{The Kauffman bracket of a \texorpdfstring{$3$}{3}-tangle shadow}\label{sec:bracket}
In this paper, the Kauffman bracket maps a shadow diagram $ D $ to $ \left< D\right>\in$  $\mathbb{Z}[x] $ and is constructed  from the following rules:
\begin{itemize}
\item [$ (\mathbf{K1}) $:] $  \left<\bigcirc \right>=x $;\label{it:i}
\item [$ (\mathbf{K2}) $:]  $ \left<\bigcirc\sqcup D\right>=x\left< D\right>$;\label{it:ii}
\item [$ (\mathbf{K3}) $:]  $\left<\protect\includegraphics[width=.03\linewidth,valign=c]{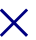}\right>=\left<\protect\includegraphics[width=.03\linewidth,valign=c]{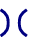}\right>+\left<\protect\includegraphics[width=.035\linewidth,valign=c]{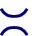}\right>$. \label{it:iii}
\end{itemize}

The diagram $ \bigcirc $ in $ (\mathbf{K1}) $ represents that of a single loop, and the symbol $ \sqcup $  in $ (\mathbf{K2}) $ denotes the disjoint union operation. Formula in $ (\mathbf{K3}) $ expresses the splitting of a crossing. Recall that the choice of such splittings for any single crossing is referred to as the so-called \textit{Kauffman state}.  Rules $ (\mathbf{K1}) $, $ (\mathbf{K2}) $ and $ (\mathbf{K3}) $ can be summarized by the summation which is taken over all the states for $ D $, namely $ \left<D\right>=\sum_{S}^{}x^{|S|} $ ,  where $ |S| $ gives the number of loops in the state $ S $. Kauffman shows that the states elements of  a $ 3$-tangle diagram $ B :=\protect\includegraphics[width=0.05\linewidth,valign=c]{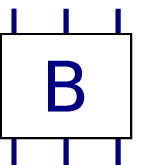}$ are generated by the product of a loop and the  following $ 5 $ elements of the \textit{$ 3 $-strand diagram monoid}  $ \mathcal{D}_3 $ \cite{Kauffman1,Stoimenow}:
\[\begin{array}{ccccccccc}
\protect\includegraphics[width=0.07\linewidth,valign=c]{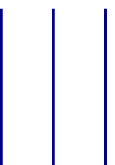}&\qquad&
\protect\includegraphics[width=0.075\linewidth,valign=c]{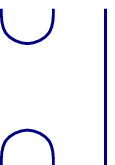}&\qquad&
\protect\includegraphics[width=0.075\linewidth,valign=c]{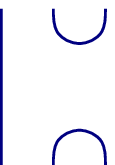}&\qquad&
\protect\includegraphics[width=0.075\linewidth,valign=c]{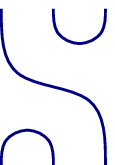}&\qquad&
\protect\includegraphics[width=0.075\linewidth,valign=c]{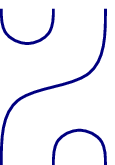} \\
1_3  &\qquad&  U_1  &\qquad&  U_2 	&\qquad&  r &\qquad&  s 
\end{array}.\]
In other words, given a state $ S $, there exist a nonnegative integer $ k $ and an element $ U $ in $ \mathcal{D}_3 $ such that one writes $ S=\bigcirc^k\sqcup U $, where $ \bigcirc^k=\bigcirc\sqcup\bigcirc\sqcup\cdots\sqcup\bigcirc $ denotes the disjoint union of $ k $ loops \cite[p.\ 100]{Kauffman2}. The bracket of the $ 3$-tangle $ B$ becomes  $ \left<B\right>=\sum_{S}^{}\left<S\right>$, where $ \left<S\right>=x^{|S|}\left<U\right> $ for certain $U\in\mathcal{D}_3  $. 

Therefore $ \left<B\right> $ is a linear combination of the brackets $ \left<1_3\right> $, $\left<U_1\right> $, $\left<U_2\right> $,$\left<r\right> $ and $\left<s\right> $, i.e., there exist five polynomials $ a,b,c,d,e $ in $ \mathbb{Z}[x] $  such that \begin{equation}\label{eq:bracket}
\left<B\right>=a\left<1_3\right>+b\left<U_1\right>+c\left<U_2\right>+d\left<r\right>+e\left<s\right>.
\end{equation} 

\begin{lemma}\label{lem:product}
Given two $ 3 $-tangles $ B$ and $ D $, we have
\begin{align*}
\left<BD\right>&=a_Ba_D\left<1_3\right>+\left(b_Ba_D+\left(a_B+b_B x+d_B\right)b_D+\left(d_B x+b_B\right)e_D\right)\left<U_1\right>\\
&\hphantom{=}+\left(c_Ba_D+\left(a_B+c_B x+e_B\right)c_D+\left(c_B+e_B x\right)d_D\right)\left<U_2\right>\\
&\hphantom{=}+\left(d_Ba_D+\left(d_B x+b_B\right)c_D+\left(a_B+b_B x+d_B\right)d_D\right)\left<r\right>\\
&\hphantom{=}+\left(e_Ba_D+\left(c_B+e_B x\right)b_D+\left(a_B+c_B x+e_B\right)e_D\right)\left<s\right>.
\end{align*}.
\end{lemma}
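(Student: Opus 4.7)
The plan is to exploit the state-sum definition of the bracket together with the decomposition \eqref{eq:bracket}. For a single tangle $B$, writing $\left<B\right>=\sum_{S}x^{|S|}\left<U_S\right>$, where $S$ ranges over Kauffman states of $B$ and $U_S\in\mathcal{D}_3$ is the resulting flat tangle (with $|S|$ disjoint loops stripped off), the coefficients $a_B,b_B,c_B,d_B,e_B$ are simply the generating polynomials
\[a_B=\sum_{S:\,U_S=1_3}x^{|S|},\quad b_B=\sum_{S:\,U_S=U_1}x^{|S|},\quad\ldots,\quad e_B=\sum_{S:\,U_S=s}x^{|S|},\]
and analogously for $D$.

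The first key observation I will use is that a Kauffman state of the composite tangle $BD$ is precisely a pair $(S_B,S_D)$ of states of the factors, since the crossings of $BD$ are the disjoint union of those of $B$ and those of $D$. Resolving such a pair yields the diagram $\bigcirc^{|S_B|+|S_D|}\sqcup(U_{S_B}\cdot U_{S_D})$, where the dot denotes vertical concatenation. Summing and applying $(\mathbf{K1})$--$(\mathbf{K3})$ then gives the bilinear identity
\[\left<BD\right>=\sum_{X,Y\in\{1_3,U_1,U_2,r,s\}}\gamma_X(B)\,\gamma_Y(D)\,\left<XY\right>,\]
with $\gamma_X(\cdot)$ denoting the coefficient of $\left<X\right>$ from \eqref{eq:bracket}.

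Next I evaluate the twenty-five products $\left<XY\right>$. Using $r=U_1U_2$ and $s=U_2U_1$ together with the monoid relations $U_i\cdot U_i=\bigcirc\sqcup U_i$, $U_1U_2U_1=U_1$ and $U_2U_1U_2=U_2$ (with $1_3$ acting as an identity), every product reduces to a basis element, possibly accompanied by one absorbed loop. For instance, $\left<r\cdot s\right>=\left<U_1U_2^2U_1\right>=x\left<U_1U_2U_1\right>=x\left<U_1\right>$, which ultimately contributes a term $xd_Be_D$ to the coefficient of $\left<U_1\right>$; the remaining twenty-four products are disposed of in the same mechanical way.

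The final step is to aggregate: for each of the five basis elements $1_3,U_1,U_2,r,s$, I collect the pairs $(X,Y)$ whose reduced product lies above it, weight $\gamma_X(B)\gamma_Y(D)$ by the power of $x$ coming from any stripped-off loop, and factor the resulting sums to match the groupings displayed in the statement. The main obstacle is purely organizational: one must tabulate the full $5\times 5$ multiplication table of $\mathcal{D}_3$ and keep careful track of which products acquire the extra loop factor, so that nothing is double-counted or omitted when the contributions are finally combined.
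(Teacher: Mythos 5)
Your proposal is correct and follows essentially the same route as the paper: both expand $\left<BD\right>$ bilinearly into the twenty-five products $\left<XY\right>$ of basis elements and then reduce each product to a basis element (times at most one absorbed loop) before collecting coefficients. The only difference is presentational --- the paper records the reductions in an explicit multiplication table for $\mathcal{D}_3$, whereas you derive them on the fly from the Temperley--Lieb relations via $r$ and $s$ being the two products of $U_1$ and $U_2$; your worked example $\left<rs\right>=x\left<U_1\right>$ is consistent with the $d_Bx\,e_D$ term in the stated coefficient of $\left<U_1\right>$.
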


\begin{proof}
We first establish the states of $ B $ leaving $ D $ intact, and then in $ D $:
\begin{align*}
\left<BD\right>&= a_B  a_D \left<1_3^2\right>+ a_B  b_D \left<1_3   U_1\right> + a_B  c_D \left<1_3   U_2\right>+ a_B  D \left<1_3   r\right>+ a_B  e_D \left<1_3   s\right>\\
&\hphantom{=}+ b_B  a_D \left<U_1   1_3\right>+ b_B  b_D \left<U_1^2\right>+ b_B  c_D \left<U_1   U_2\right>+ b_B  d_D \left<U_1   r\right>+ b_B  e_D \left<U_1   s\right>\\
&\hphantom{=}+ c_B  a_D \left<U_2   1_3\right>+c_B  b_D \left<U_2   U_1\right>+ c_B  c_D \left<U_2^2\right>+ c_B  d_D \left<U_2   r\right>+ c_B  e_D \left<U_2   s\right>\\
&\hphantom{=}+ d_B  a_D \left<r   1_3\right>+ d_B  b_D \left<r   U_1\right>+ d_B  c_D \left<r   U_2\right>+ d_B  d_D \left<r^2\right>+ d_B  e_D \left<r   s\right>\\
&\hphantom{=}+ e_B  a_D \left<s   1_3\right>+ e_B  b_D \left<s   U_1\right>+ e_B  c_D \left<s   U_2\right>+ e_B  d_D \left<sr\right>+ e_B  e_D \left<s^2\right>.
\end{align*}
The brackets for the pairs in the right-hand side can be evaluated by applying the  following multiplication table. 
\begin{table}[H]
\centering
\begin{tabular}{|c|c|c|c|c|c|}
\hline 
$.$ & $1_3$ & $U_{1}$ & $U_{2}$ & $r$ & $s$\tabularnewline
\hline 
\hline 
$1_3$ & $1_3$ & $U_{1}$ & $U_{2}$ & $r$ & $s$\tabularnewline
\hline 
$U_{1}$ & $U_{1}$ & $\bigcirc\sqcup U_{1}$ & $s$ & $U_{1}$ & $\bigcirc\sqcup s$\tabularnewline
\hline 
$U_{2}$ & $U_{2}$ & $r$ & $\bigcirc\sqcup U_{2}$ & $\bigcirc \sqcup r$ & $U_{2}$\tabularnewline
\hline 
$r$ & $r$ & $\bigcirc\sqcup r$ & $U_{2}$ & $r$ & $\bigcirc\sqcup U_{2}$\tabularnewline
\hline 
$s$ & $s$ & $U_{1}$ & $\bigcirc \sqcup s$ & $\bigcirc \sqcup U_{1}$ & $s$\tabularnewline
\hline 
\end{tabular}
\caption{Multiplication of elements in $ \mathcal{D}_3 $.}
\label{tab:product}
\end{table}
The proof is then completed by factoring with respect to the resulting brackets, eventually simplified according to $ (\mathbf{K2}) $.
\end{proof}

\begin{notation}
Let $ B_n:=BB\cdots B $ denote the $ 3 $-tangle obtained by multiplying the $ 3 $-tangle $ B $  $ n$ times, with $ B_0:=1_3 $. For convenience, we shall identify the bracket formal expression in \eqref{eq:bracket} by the  $ 5 $-tuple $ [a,b,c,d,e]^T$. Similarly, assume that $\left<B_n\right>$ is identified by $ [a_n,b_n,c_n,d_n,e_n]^T $.
\end{notation}

\begin{lemma} The bracket $ 5 $-tuple for $ B_n $ is given by
\begin{equation}\label{eq:matrixdefinition}
\begin{bmatrix}
a_n\\b_n\\c_n\\d_n\\e_n
\end{bmatrix}=\begin{bmatrix}a & 0 & 0 & 0 & 0\\
b & a+b x+d & 0 & 0 & d x+b\\
c & 0 & a+c x+e& c+e x & 0\\
d & 0 & d x+b & a+b x+d & 0\\
e & c+e x & 0 & 0 & a+c x+e\end{bmatrix}^n\begin{bmatrix}
1\\0\\0\\0\\0
\end{bmatrix}.
\end{equation}
\end{lemma}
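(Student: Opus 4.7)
The plan is to proceed by induction on $n$, with the substitution formula of \Lem{lem:product} doing all of the work.

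For the base case $n=0$ we have $B_0 = 1_3$, so $\left<B_0\right>=\left<1_3\right>$, which in $5$-tuple form is $[1,0,0,0,0]^T$. This is exactly the rightmost column vector in \eqref{eq:matrixdefinition} and also $M^0[1,0,0,0,0]^T$, where $M$ denotes the $5\times 5$ matrix displayed in the statement.

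For the inductive step, I would write $B_n = B\cdot B_{n-1}$ (valid since concatenation of tangles is associative, so $BB\cdots B$ may be grouped in this way), and apply \Lem{lem:product} with the first factor $B$ having coefficients $(a,b,c,d,e)$ and the second factor $B_{n-1}$ having coefficients $(a_{n-1},b_{n-1},c_{n-1},d_{n-1},e_{n-1})$. Substituting into the five expressions of the lemma gives
\begin{align*}
a_n &= a\,a_{n-1},\\
b_n &= b\,a_{n-1} + (a+bx+d)\,b_{n-1} + (dx+b)\,e_{n-1},\\
c_n &= c\,a_{n-1} + (a+cx+e)\,c_{n-1} + (c+ex)\,d_{n-1},\\
d_n &= d\,a_{n-1} + (dx+b)\,c_{n-1} + (a+bx+d)\,d_{n-1},\\
e_n &= e\,a_{n-1} + (c+ex)\,b_{n-1} + (a+cx+e)\,e_{n-1}.
\end{align*}
Reading off these coefficients row by row, one recognizes precisely the entries of $M$ acting on $[a_{n-1},b_{n-1},c_{n-1},d_{n-1},e_{n-1}]^T$. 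By the induction hypothesis this equals $M\cdot M^{n-1}[1,0,0,0,0]^T = M^n[1,0,0,0,0]^T$, completing the induction.

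The only subtlety, and hence the main bookkeeping obstacle, is making sure that the asymmetry in \Lem{lem:product} is handled correctly: the lemma's formula is asymmetric in $B$ and $D$, so one must carry out the substitution with $B$ as the left factor and $B_{n-1}$ as the right factor (rather than the reverse). Matching the roles of $(a_B,\ldots,e_B)$ with the fixed data $(a,\ldots,e)$ and of $(a_D,\ldots,e_D)$ with the inductive data $(a_{n-1},\ldots,e_{n-1})$ is what makes the five rows of the matrix $M$ come out exactly as written; no calculation beyond term identification is required.
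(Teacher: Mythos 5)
Your proof is correct and follows essentially the same route as the paper: the paper also writes $B_{n+1}=BB_n$, applies \Lem{lem:product} with $B$ as the left factor to obtain the matrix recurrence, and then unfolds it from the initial condition $[1,0,0,0,0]^T$, which is just your induction phrased informally. Your explicit check that the five substituted coefficient expressions reproduce the rows of $M$ is exactly the verification the paper leaves implicit.
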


\begin{proof}\label{lem:lem1} We write $ B_{n+1}=BB_n $, then from \Lem{lem:product} we have
\begin{equation}\label{eq:matrix}
\begin{bmatrix}
a_{n+1}\\b_{n+1}\\c_{n+1}\\d_{n+1}\\e_{n+1}
\end{bmatrix}=\begin{bmatrix}a & 0 & 0 & 0 & 0\\
b & a+b x+d & 0 & 0 & d x+b\\
c & 0 & a+c x+e& c+e x & 0\\
d & 0 & d x+b & a+b x+d & 0\\
e & c+e x & 0 & 0 & a+c x+e\end{bmatrix}\begin{bmatrix}
a_n\\b_n\\c_n\\d_n\\e_n
\end{bmatrix}.
\end{equation}
We conclude by unfolding the recurrence and taking into consideration  the initial condition $ [a_0,b_0,c_0,d_0,e_0]^T=[1,0,0,0,0]^T$.
\end{proof}

We let $ M_B $ denote the  $ 5\times5 $ matrix in \eqref{eq:matrixdefinition}, and we will later refer to it as the \textit{states matrix} for the $ 3 $-tangle $ B$. Using the standard method for computing \eqref{eq:matrixdefinition} we obtain the characteristic polynomial for $ M_B $
\[
\chi\left(M_B,\lambda\right)=-(\lambda-a)\left(\lambda-\dfrac{1}{2}\left(p-q\right)\right)^2\left(\lambda-\dfrac{1}{2}\left(p+q\right)\right)^2,
\] 
then
\begin{align}
a_n &= a^n,\label{eq:an}\\
b_n&=\dfrac{-1}{2q\left(x^2-1\right)}\left(2 a^n q x+\left(\dfrac{p-q}{2}\right)^n \left((b-c) x^2+(-d-e-q) x-2 b\right)\right.\nonumber\\
&\hphantom{=+\dfrac{-1}{2q(x^2-1)}}\left.+\left(\dfrac{p+q}{2}\right)^n \left((-b+c) x^2+(d+e-q) x+2 b\right)  \right),\\
c_n&=\dfrac{-1}{2q\left(x^2-1\right)}\left(2 a^n q x+\left(\dfrac{p+q}{2}\right)^n \left((b-c) x^2+(d+e-q) x+2 c\right)\right.\nonumber\\
&\hphantom{=+\dfrac{-1}{2q(x^2-1)}}\left.+\left(\dfrac{p-q}{2}\right)^n \left((-b+c) x^2+(-d-e-q) x-2 c\right)\right),\\
d_n&=\dfrac{1}{2q\left(x^2-1\right)}\left(2 a^n q+\left(\dfrac{p-q}{2}\right)^n \left(-2 d x^2+(-b-c) x+d-e-q\right)\right.\nonumber\\
&\hphantom{=+\dfrac{-1}{2q\left(x^2-1\right)} }\left.+\left(\dfrac{p+q}{2}\right)^n \left(2 d x^2+\left(b+c\right) x-d+e-q\right)\right),\\
e_n&=\dfrac{1}{2q\left(x^2-1\right)}\left(2 a^n q+\left(\dfrac{p-q}{2}\right)^n \left(-2 e x^2+(-b-c) x-d+e-q\right)\right.\nonumber\\
&\hphantom{=+\dfrac{1}{2q\left(x^2-1\right)}}\left.+\left(\dfrac{p+q}{2}\right)^n \left(2 e x^2 +(b+c) x +d-e-q\right)\right),\label{eq:en}
\end{align}
where
\begin{align}
p&:=(b+c)x+2 a+d+e\label{eq:p},\\
q&:=\sqrt{\left(b^2 -2 b c +c^2 +4 d e\right)x^2 + \left(2 b d +2 c d +2 b e +2 c e\right) x +  4 b c+d^2-2 d e+e^2}.\label{eq:q}
\end{align}
 
Now let $ \overline{B_n} $ denote the tangle closure of $ B_n $. In order to evaluate $ \left<\overline{B_n}\right> $ from  formula \eqref{eq:matrixdefinition} we need to apply the
closure to the elements of $ \mathcal{D}_3 $.

\begin{lemma}
The expression of the bracket polynomial for the closure $ \overline{B_n} $ is given by 
\begin{equation}\label{eq:closure}
\left<\overline{B_n}\right>= x^3a_n+x^2\left(b_n+c_n\right)+x\left(d_n+e_n\right).
\end{equation}
\end{lemma}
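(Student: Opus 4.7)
My plan is to exploit the linear decomposition \eqref{eq:bracket}, applied to $B_n$ with the coefficients $a_n,b_n,c_n,d_n,e_n$ provided by \eqref{eq:matrixdefinition}. Recall that $\left<B_n\right>$ is the state sum $\sum_S x^{|S|}\left<U_S\right>$, where $U_S\in\mathcal{D}_3$ is the generator reached by the state $S$ after stripping off any extra disjoint loops. Since the closure operation commutes with disjoint union of loops and with this state-wise decomposition, applying the closure inside the state sum gives
\[
\left<\overline{B_n}\right> = a_n\left<\overline{1_3}\right> + b_n\left<\overline{U_1}\right> + c_n\left<\overline{U_2}\right> + d_n\left<\overline{r}\right> + e_n\left<\overline{s}\right>,
\]
so the problem reduces to computing $\left<\overline{U}\right>$ for each of the five generators $U$ of $\mathcal{D}_3$.

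Next I would note that each generator is itself a crossing-free planar matching of six endpoints, hence its closure carries no crossings either; by repeated application of $(\mathbf{K1})$ and $(\mathbf{K2})$ its bracket then equals $x^k$, where $k$ is the number of loops produced. It remains to trace the arcs and count. For $\overline{1_3}$, three parallel strands close into three separate loops, giving $x^3$. For $\overline{U_1}$ and $\overline{U_2}$, the cup and the cap, together with one closure arc, form a single loop while the remaining straight strand closes into a second loop, giving $x^2$ in each case. For $\overline{r}$ and $\overline{s}$, the diagonal strand connecting one top endpoint to a non-matching bottom endpoint concatenates, via the closure arcs, with the cup and the cap into a single long cycle, contributing $x$.

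Substituting these five values into the displayed decomposition yields the claimed formula. The only step that needs a bit of attention is the loop count for $\overline{r}$ and $\overline{s}$: one must first read off from \Tabs{tab:product} that, for instance, $r=U_2U_1$ joins top $2$ to top $3$ and bottom $1$ to bottom $2$ by a cup and cap, with a diagonal strand from top $1$ to bottom $3$; once this is drawn, the closure arcs are clearly seen to merge all six endpoints into one loop, and the mirror argument settles $\overline{s}$. Everything else is routine bookkeeping in the state sum.
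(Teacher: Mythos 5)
Your proof is correct and follows essentially the same route as the paper: pass the closure through the state-sum decomposition and evaluate $\left<\overline{1_3}\right>=x^3$, $\left<\overline{U_1}\right>=\left<\overline{U_2}\right>=x^2$, $\left<\overline{r}\right>=\left<\overline{s}\right>=x$ by counting loops. The only difference is that you trace the arcs explicitly where the paper simply exhibits the five closed diagrams pictorially.
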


The splitting at each crossing do not conflict with the closing process, hence the only point remaining concerns the evaluation of the brackets to the closure of the elements of $ \mathcal{D}_3 $, namely
\[\begin{array}{ccccccccc}
\protect\includegraphics[width=0.1\linewidth,valign=c]{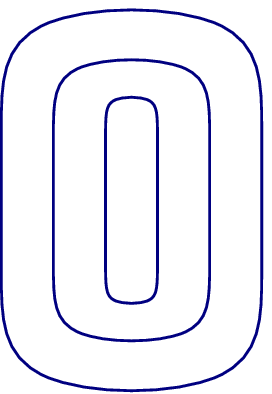}&\qquad&
\protect\includegraphics[width=0.1\linewidth,valign=c]{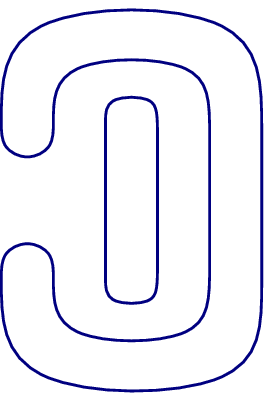}&\qquad&
\protect\includegraphics[width=0.1\linewidth,valign=c]{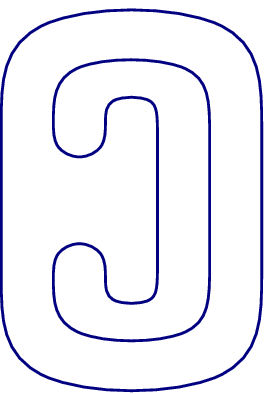}&\qquad&
\protect\includegraphics[width=0.1\linewidth,valign=c]{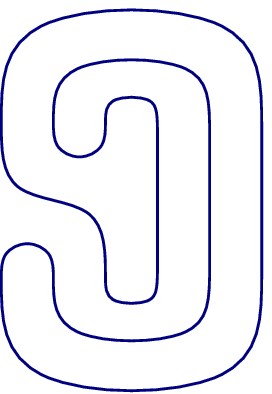}&\qquad&
\protect\includegraphics[width=0.1\linewidth,valign=c]{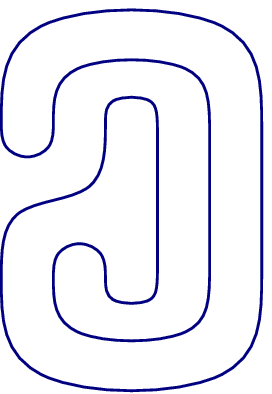} \\
\left<\overline{ 1_3}\right>=x^3  &\qquad&  \left<\overline{ U_1}\right>=x^2  &\qquad&  \left<\overline{U_2} \right>=x^2	&\qquad&  \left<\overline{r}\right>=x &\qquad& \left< \overline{s} \right>=x
\end{array}.\]

Next, combining  \eqref{eq:matrixdefinition}, \eqref{eq:an}--\eqref{eq:en} and \eqref{eq:closure}, we obtain a better expression of the bracket:

\begin{lemma}
The bracket polynomial for  the  knot $ \overline{B_n} $ is given by 
\begin{equation}\label{eq:bracketpolynomial}
\left<\overline{B_n}\right>=xa^n\left(x^2-2\right)+x\left(\left(\dfrac{p-q}{2}\right)^n+\left(\dfrac{p+q}{2}\right)^n\right),
\end{equation}
where $ p $ and $ q $ are expressions defined in \eqref{eq:p} and \eqref{eq:q}.
\end{lemma}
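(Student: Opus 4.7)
The plan is to substitute the closed-form expressions \eqref{eq:an}--\eqref{eq:en} for $a_n,b_n,c_n,d_n,e_n$ into the formula $\left<\overline{B_n}\right>= x^3a_n+x^2(b_n+c_n)+x(d_n+e_n)$ from the previous lemma, and then collect the result as a linear combination of $a^n$, $\left(\tfrac{p-q}{2}\right)^n$, and $\left(\tfrac{p+q}{2}\right)^n$, whose coefficients we expect to simplify drastically.

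A key observation that makes the bookkeeping manageable is that formulas \eqref{eq:an}--\eqref{eq:en} come in two symmetric pairs: $b_n,c_n$ on the one hand, and $d_n,e_n$ on the other. So I would first compute the sums $b_n+c_n$ and $d_n+e_n$ before multiplying by $x^2$ and $x$, respectively. In the sum $b_n+c_n$, the quadratic terms $(b-c)x^2$ and $(-b+c)x^2$ occurring in each eigenvalue block cancel; in the sum $d_n+e_n$, the terms $-d+e$ and $d-e$ cancel, while $2dx^2$ and $2ex^2$ combine into $2(d+e)x^2$. After these cancellations one obtains
\[
b_n+c_n=\frac{-1}{q(x^2-1)}\!\left(2a^n q x+\left(\tfrac{p-q}{2}\right)^n\!\left[(-d-e-q)x-(b+c)\right]+\left(\tfrac{p+q}{2}\right)^n\!\left[(d+e-q)x+(b+c)\right]\right),
\]
and an analogous tidy expression for $d_n+e_n$.

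Now I would form $x^3a_n+x^2(b_n+c_n)+x(d_n+e_n)$ and read off the three coefficients. The coefficient of $a^n$ collects $x^3-\tfrac{2x^3}{x^2-1}+\tfrac{2x}{x^2-1}=x^3-\tfrac{2x(x^2-1)}{x^2-1}=x(x^2-2)$, matching the first term of \eqref{eq:bracketpolynomial}. For the coefficient of $\left(\tfrac{p+q}{2}\right)^n$, the contributions from $x^2(b_n+c_n)$ and $x(d_n+e_n)$ combine: the $x^3(d+e)$ pieces cancel between the two, the $x^2(b+c)$ pieces cancel, and what remains in the numerator is $x^3q-xq=xq(x^2-1)$, which exactly cancels the $q(x^2-1)$ in the denominator, leaving simply $x$. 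The coefficient of $\left(\tfrac{p-q}{2}\right)^n$ is handled by the same computation with $q$ replaced by $-q$, producing $x$ again.

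The one non-trivial step is verifying these cancellations, but no new structural idea is needed beyond patient algebra. The main obstacle is simply keeping track of signs across the two eigenvalue blocks; the symmetry between $b\leftrightarrow c$ and $d\leftrightarrow e$ guides which terms must cancel and which must combine, and the factor $x^2-1$ in the denominator is disposed of exactly by the factor $x^2-1$ produced when adding the $x^3$ and $-x$ contributions.
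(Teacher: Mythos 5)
Your proposal is correct and is precisely the computation the paper itself invokes (the paper merely states that the result follows by "combining" the closure formula with the closed forms for $a_n,\dots,e_n$, without displaying the algebra); your cancellations of the $(d+e)x^3$ and $(b+c)x^2$ terms and the final factor $qx(x^2-1)/\bigl(q(x^2-1)\bigr)=x$ all check out.
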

Finally, we let $ \overline{B}(x;y):=\sum_{n\geq0}^{}\left<\overline{B_n}\right>y^n $ denote the generating function of  $ \big(\left<\overline{B_n}\right>\big)_n $. By \eqref{eq:bracketpolynomial} we deduce
\begin{align*}
\overline{B}(x;y)&=\resizebox{.888\linewidth}{!}{$\frac{\left( (b+c)x+2 a+d+e\right)  y-2}{\left( \left( d e-b c\right)    {{x}^{2}}+\left( -a c-a b\right)  x+\left( -d-a\right)  e-a d+b c-{{a}^{2}}\right)    {{y}^{2}}+\left( (b+c)x+2 a+d+e\right)  y-1}$}\\
&\hphantom{=}+\frac{x \left( {{x}^{2}}-2\right) }{1-a y}.
\end{align*}

\section{Application}\label{sec:application}
Throughout this section, let us refer to the $ 3 $-tangles in \eqref{eq:braid} as \textit{generators}. Recall that in the expression $ \left<\overline{B_n}\right> = \sum_{k>0}^{}s_B(n,k)x^k $  we have $ b_{n,k}=\#\{S\mid \textit{$ S $ is a state of $ B_n $  and  $ |S|=k $}\} $, with $ B\in\{T,C,E\} $. For each flat sinnet Turk's head below, we will give the corresponding distribution  $ \left(s_B{(n,k)}\right)_{n,k}$ for small values of $ n $ and $ k $.

\begin{enumerate}
\item \textbf{Three-lead Turk's head}. Let $ \sum_{k\geq 0}^{}s_T(n,k)x^k:=\left<\overline{T_n}\right> $.

\begin{itemize}
\item Bracket for the generator $ T $:

\begin{align*}
\left<\protect\includegraphics[width=0.045\linewidth,valign=c]{genB1}\right>&=\left<\protect\includegraphics[width=0.045\linewidth,valign=c]{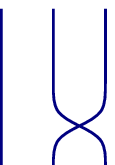}\right>+\left<\protect\includegraphics[width=0.045\linewidth,valign=c]{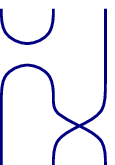}\right>\\
&=\left<\protect\includegraphics[width=0.045\linewidth,valign=c]{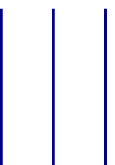}\right>+\left<\protect\includegraphics[width=0.045\linewidth,valign=c]{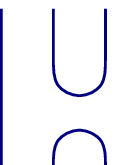}\right>+\left<\protect\includegraphics[width=0.045\linewidth,valign=c]{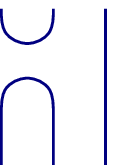}\right>+\left<\protect\includegraphics[width=0.045\linewidth,valign=c]{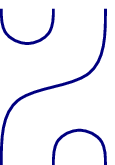}\right>\\
\left<T\right>&=\left<1_3\right>+\left<U_1\right>+\left<U_2\right>+\left<s\right>.
\end{align*}

\item States matrix:
\[M_T=\begin{bmatrix}1 & 0 & 0 & 0 & 0\\
1 & x+1 & 0 & 0 & 1\\
1 & 0 & x+2 & x+1 & 0\\
0 & 0 & 1 & x+1 & 0\\
1 & x+1 & 0 & 0 & x+2\end{bmatrix}.\]

\item Bracket for $ T_n $:
\[\left<\overline{T_n}\right>=x\left(x^2-2 \right)+  x\left(\left(\dfrac{2 x+3-\sqrt{4 x+5}}{2}\right)^n+ \left(\dfrac{2x+3+\sqrt{4 x+5}}{2}\right)^n\right).\]

\item Generating function:
\[
\overline{T}(x;y)=\dfrac{x   \left( \left( -2 x-3\right)  y+2\right) }{\left( {{x}^{2}}+2 x+1\right)    {{y}^{2}}+\left( -2 x-3\right)  y+1}+\dfrac{x   \left( {{x}^{2}}-2\right) }{1-y}.
\]
\item Distribution of $  \left(s_T{(n,k)}\right)_{n,k}$: \cite[\seqnum{A316659}]{Sloane}
\begin{table}[H]
\centering
\resizebox{\linewidth}{!}{%
$\begin{array}{c|rrrrrrrrrrrrrrrrrrrrrrrrr}
n\ \backslash\ k		 &0		 &1		 &2		 &3		 &4		 &5		 &6		 &7		&8	&9	&10&11\\
\midrule
0 & 0 & 0 & 0 & 1\\
1 & 0 & 1 & 2 & 1\\
2 & 0 & 5 & 8 & 3\\
3 & 0 & 16 & 30 & 16 & 2\\
4 & 0 & 45 & 104 & 81 & 24 & 2\\
5 & 0 & 121 & 340 & 356 & 170 & 35 & 2\\
6 & 0 & 320 & 1068 & 1411 & 932 & 315 & 48 & 2\\
7 & 0 & 841 & 3262 & 5209 & 4396 & 2079 & 532 & 63 & 2\\
8 & 0 & 2205 & 9760 & 18281 & 18784 & 11440 & 4144 & 840 & 80 & 2\\
9 & 0 & 5776 & 28746 & 61786 & 74838 & 55809 & 26226 & 7602 & 1260 & 99 & 2\\
10 & 0 & 15125 & 83620 & 202841 & 282980 & 249815 & 144488 & 54690 & 13080 & 1815 & 120 & 2\\
\end{array}$}
\caption{Values of $ s_T(n,k) $ for $ 0\leq n\leq 10 $ and $ 0\leq k\leq 11 $.}
\label{tab:T}
\end{table}
\end{itemize}

\item \textbf{Chain sinnet}. Let $ \sum_{k\geq 0}^{}s_C(n,k)x^k:=\left<\overline{C_n}\right> $.
\begin{itemize}
\item Bracket for the generator $ C $:
\begin{align*}\label{key}
\left<\protect\includegraphics[width=0.045\linewidth,valign=c]{genB2}\right>&=\left<\protect\includegraphics[width=0.045\linewidth,valign=c]{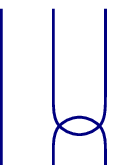}\right>+\left<\protect\includegraphics[width=0.045\linewidth,valign=c]{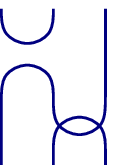}\right>\\
&=\left<\protect\includegraphics[width=0.045\linewidth,valign=c]{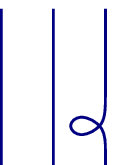}\right>+\left<\protect\includegraphics[width=0.045\linewidth,valign=c]{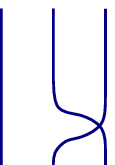}\right>+\left<\protect\includegraphics[width=0.045\linewidth,valign=c]{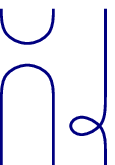}\right>+\left<\protect\includegraphics[width=0.045\linewidth,valign=c]{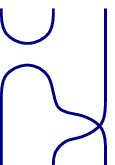}\right>\\
&=\left<\protect\includegraphics[width=0.045\linewidth,valign=c]{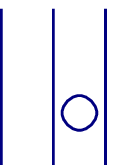}\right>
+\left<\protect\includegraphics[width=0.045\linewidth,valign=c]{T24}\right>
+\left<\protect\includegraphics[width=0.045\linewidth,valign=c]{T24}\right>
+\left<\protect\includegraphics[width=0.045\linewidth,valign=c]{T23}\right>
+\left<\protect\includegraphics[width=0.045\linewidth,valign=c]{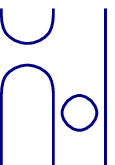}\right>
+\left<\protect\includegraphics[width=0.045\linewidth,valign=c]{T22}\right>
+\left<\protect\includegraphics[width=0.045\linewidth,valign=c]{T22}\right>
+\left<\protect\includegraphics[width=0.045\linewidth,valign=c]{T21}\right>\\
\left<C\right>&=(x+2)\left<1_3\right>+(x+2)\left<U_1\right>+\left<U_2\right>+\left<s\right>.
\end{align*}

\item States matrix:
\[M_C=\begin{bmatrix}x+2 & 0 & 0 & 0 & 0\\
x+2 & {{x}^{2}}+3 x+2 & 0 & 0 & x+2\\
1 & 0 & 2 x+3 & x+1 & 0\\
0 & 0 & x+2 & {{x}^{2}}+3 x+2 & 0\\
1 & x+1 & 0 & 0 & 2 x+3\end{bmatrix}.\]

\item Bracket for $ C_n $:
\begin{align*}
\left<\overline{C_n}\right>&=x\left(x^2-2\right)(x+2)^n+
x\left(\left(\dfrac{x^2+5x+5-\sqrt{{{x}^{4}}+2 {{x}^{3}}+3 {{x}^{2}}+10 x+9}}{2}\right)^n\right.\\ &\hphantom{=}+\left.\left(\dfrac{x^2+5x+5+\sqrt{{{x}^{4}}+2 {{x}^{3}}+3 {{x}^{2}}+10 x+9}}{2}\right)^n\right).
\end{align*}

\item Generating function
\[\overline{C}(x;y)=\frac{x   \left( \left( -{{x}^{2}}-5 x-5\right)  y+2\right) }{\left( 2 {{x}^{3}}+8 {{x}^{2}}+10 x+4\right)    {{y}^{2}}+\left( -{{x}^{2}}-5 x-5\right)  y+1}+\frac{x   \left( {{x}^{2}}-2\right) }{1-\left( x+2\right)  y}.\]

\item Distribution of $  \left(s_C(n,k)\right)_{n,k}$: 
\begin{table}[H]
\centering
\resizebox{\linewidth}{!}{%
$\begin{array}{c|rrrrrrrrrrrrrrrrrrrrrrrrr}
n\ \backslash\ k		 &0		 &1		 &2		 &3		 &4		 &5		 &6		 &7		&8	&9	&10&11&12&13\\
\midrule
0 & 0 & 0 & 0 & 1\\
1 & 0 & 1 & 3 & 3 & 1\\
2 & 0 & 9 & 22 & 21 & 10 & 2\\
3 & 0 & 49 & 141 & 164 & 105 & 42 & 10 & 1\\
4 & 0 & 225 & 796 & 1186 & 1008 & 569 & 232 & 67 & 12 & 1\\
5 &0 & 961 & 4115 & 7677 & 8400 & 6205 & 3393 & 1435 & 461 & 105 & 15 & 1\\
6 & 0 & 3969 & 20106 & 45481 & 61630 & 57078 & 39298 & 21239 & 9198 & 3151 & 822 & 153 & 18 & 1
\end{array}$}
\caption{Values of $ s_C(n,k) $ for $ 0\leq n\leq 6 $ and $ 0\leq k\leq 13 $.}
\label{tab:C}
\end{table}
\end{itemize}

\item \textbf{Figure-eight chain}. Let $ \sum_{k\geq 0}^{}s_E(n,k)x^k:=\left<\overline{E_n}\right> $.
\begin{itemize}
\item Bracket for the generator $ E $:
\begin{align*}\label{key}
\left<\protect\includegraphics[width=0.045\linewidth,valign=c]{genB3}\right>&=\left<\protect\includegraphics[width=0.045\linewidth,valign=c]{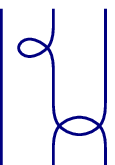}\right>+\left<\protect\includegraphics[width=0.045\linewidth,valign=c]{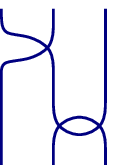}\right>\\
&=\left<\protect\includegraphics[width=0.045\linewidth,valign=c]{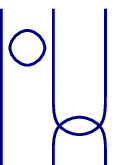}\right>+\left<\protect\includegraphics[width=0.045\linewidth,valign=c]{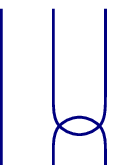}\right>+\left<\protect\includegraphics[width=0.045\linewidth,valign=c]{genB2}\right>=(x+1)\left<\protect\includegraphics[width=0.045\linewidth,valign=c]{E21}\right>+\left<C\right>\\
&=(x+1)\left(\left<\protect\includegraphics[width=0.045\linewidth,valign=c]{C38}\right>
+\left<\protect\includegraphics[width=0.045\linewidth,valign=c]{T24}\right>
+\left<\protect\includegraphics[width=0.045\linewidth,valign=c]{T24}\right>
+\left<\protect\includegraphics[width=0.045\linewidth,valign=c]{T23}\right>\right)+\left<C\right>\\
\left<E\right>&=\left(x^2+4x+4\right)\left<1_3\right>+(x+2)\left<U_1\right>+(x+2)\left<U_2\right>+\left<s\right>.
\end{align*}

\item States matrix:
\[M_E=\begin{bmatrix}{{x}^{2}}+4 x+4 & 0 & 0 & 0 & 0\\
x+2 & 2 {{x}^{2}}+6 x+4 & 0 & 0 & x+2\\
x+2 & 0 & 2 {{x}^{2}}+6 x+5 & 2 x+2 & 0\\
0 & 0 & x+2 & 2 {{x}^{2}}+6 x+4 & 0\\
1 & 2 x+2 & 0 & 0 & 2 {{x}^{2}}+6 x+5\end{bmatrix}.\]

\item Bracket for $ \overline{E_n} $:
\begin{align*}
\left<\overline{E_n}\right>&=x(x^2-2)\left(x^2+4x+4\right)^n+ x\left(\left(\dfrac{4 {{x}^{2}}+12 x+9-\sqrt{8 {{x}^{2}}+24 x+17}}{2}\right)^n\right.\\
&\hphantom{=}+\left. \left(\dfrac{4 {{x}^{2}}+12 x+9+\sqrt{8 {{x}^{2}}+24 x+17}}{2}\right)^n\right).
\end{align*}

\item Generating function
\begin{align*}
\overline{E}(x;y)&=\frac{x   \left( \left( -4 {{x}^{2}}-12 x-9\right)  y+2\right) }{\left( 4 {{x}^{4}}+24 {{x}^{3}}+52 {{x}^{2}}+48 x+16\right)    {{y}^{2}}+\left( -4 {{x}^{2}}-12 x-9\right)  y+1}\\
&\hphantom{=}+\frac{x   \left( {{x}^{2}}-2\right) }{1-{{\left( x^2+4x+4\right) }} y}.
\end{align*}
\item Distribution of $  \left(s_E(n,k)\right)_{n,k}$:
\begin{table}[H]
\centering
\resizebox{\linewidth}{!}{%
$\begin{array}{c|rrrrrrrrrrrrrrrrrrrrrrrrr}
n\ \backslash\ k		 &0		 &1		 &2		 &3		 &4		 &5		 &6		 &7		&8	&9	&10&11&12&13\\
\midrule
0 & 0 & 0 & 0 & 1\\
1 & 0 & 1 & 4 & 6 & 4 & 1\\
2 & 0 & 17 & 56 & 80 & 64 & 30 & 8 & 1\\
3 & 0 & 169 & 660 & 1120 & 1096 & 684 & 280 & 74 & 12 & 1\\
4 & 0 & 1377 & 6640 & 14112 & 17504 & 14128 & 7808 & 3008 & 800 & 142 & 16 & 1\\
5 & 0 & 10201 & 59660 & 156624 & 244280 & 252460 & 182544 & 94960 & 35904 & 9800 & 1880 & 242 & 20 & 1
\end{array}$}
\caption{Values of $ s_E(n,k) $ for $ 0\leq n\leq 5 $ and $ 0\leq k\leq 13 $.}
\label{tab:E}
\end{table}
\end{itemize}
\end{enumerate}

\begin{remark}
Column $ 1 $ in \Tabs{tab:T} is sequence \seqnum{A004146} in the OEIS \cite{Sloane}, the sequence of alternate Lucas numbers minus $ 2 $, which is  the determinant of the Turk's Head Knots $ THK(3,n) $ \cite{KST}. Column $ 2 $ is the $ x $-coefficients of a generalized  Jaco-Lucas polynomials for even indices \cite{Sun} (see column $ 1 $ in triangle \seqnum{A122076})  and is also a subsequence of a Fibonacci-Lucas convolution \seqnum{A099920} for odd indices. Column $ 1 $ in 	\Tabs{tab:C}  is \seqnum{A060867} with a leading $ 0 .$ 

Rows $ 1 $ in \Tabs{tab:T}, \Tabs{tab:C}, \Tabs{tab:E} match the coefficients of the bracket for the $ 2 $-twist loop (see row $ 1 $ in \seqnum{A300184}, \seqnum{A300192} and row $ 0 $ in \seqnum{A300454}), the $ 3 $-twist loop and the $4 $-twist loop modulo planar isotopy and move on the $ 2 $-sphere \cite{Ramaharo}, respectively (see \Figs{Fig:Equivalent} \subref{subfig:T1}, \subref{subfig:C1} and \subref{subfig:E1}). Row $ 2 $ in \Tabs{tab:T} gives those of the figure-eight knot (see \Figs{Fig:Equivalent} \subref{subfig:C1} and row $ 1 $ in \seqnum{A300454}).

\begin{figure}[H]
\centering
{ %
\subfigure[$T_1$]{\includegraphics[width=0.22\linewidth]{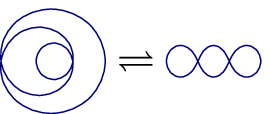}\label{subfig:T1}}\hfill%
\subfigure[$T_2$]{\includegraphics[width=0.22\linewidth]{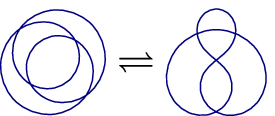}\label{subfig:C1}}\hfill%
\subfigure[$C_1$]{\includegraphics[width=0.23\linewidth]{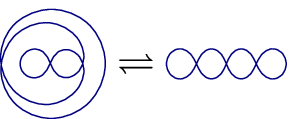}\label{subfig:F1}}\hfill%
\subfigure[$E_1$]{\includegraphics[width=0.25\linewidth]{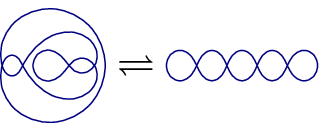}\label{subfig:E1}} %
}
\caption{Equivalent knot shadow diagrams.}
\label{Fig:Equivalent}
\end{figure}
\end{remark}

\bigskip
\small \textbf{2010 Mathematics Subject Classifications}:  05A19;  57M25.
\end{document}